\documentclass[11pt]{article}

\usepackage[a4paper,margin=1.15in]{geometry}
\usepackage[T1]{fontenc}
\usepackage{lmodern}
\usepackage{microtype}
\usepackage{amsmath,amssymb,amsthm,mathtools}
\usepackage{bm}
\usepackage{enumitem}
\usepackage{tikz}
\usetikzlibrary{calc,positioning,shapes.geometric}
\usepackage[numbers,sort&compress]{natbib}
\usepackage[colorlinks=true,linkcolor=blue!50!black,citecolor=blue!50!black,urlcolor=blue!50!black]{hyperref}
\usepackage[nameinlink,capitalize,noabbrev]{cleveref}

\newtheorem{theorem}{Theorem}[section]
\newtheorem{lemma}[theorem]{Lemma}

\newtheorem{conjecture}[theorem]{Conjecture}
\theoremstyle{definition}

\theoremstyle{remark}

\newcommand{\N}{\mathbb{N}}

\usepackage{authblk}

\title{\bf The maximum number of paths of even length in a planar graph}
\author{
Zhen Liu\footnote{Email: 1552580575@qq.com},
~Chuanshu Wu\footnote{Email: cswu97@126.com (Corresponding author)}\\
{\small Center for Discrete Mathematics, Fuzhou University, Fujian, 350003, China}}
\date{}
\begin{document}
\maketitle
\begin{abstract}
For graphs $G$ and $H$, let $N(G,H)$ be the number of unlabeled, not necessarily induced copies of $H$ in $G$, and let $f(n,H)$ be the maximum of $N(G,H)$ over all $n$-vertex planar graphs $G$. Ghosh, Gy\H{o}ri, Martin, Paulos, Salia, Xiao and Zamora conjectured that, for every fixed integer $\ell\ge 2$,
\[
 f(n,P_{2\ell+1})
 =4\ell\left(\frac{n}{\ell}\right)^{\ell+1}+O(n^\ell).
\]
We prove the conjecture, including the stated error term.
Along the way, we also settle the Cox–Martin optimization conjecture.

\end{abstract}

\section{Introduction}

All graphs in this paper are finite and simple. For graphs $G$ and $H$,
let $N(G,H)$ denote the number of unlabeled, not necessarily induced
copies of $H$ in $G$, define
\[
 f(n,H)=\max\{N(G,H): |V(G)|=n \text{ and } G \text{ is planar}\}.
\]
We denote by $P_k$ and $C_k$ the path and the cycle on $k$ vertices,
respectively. Thus $P_{2\ell+1}$ is a path of even length $2\ell$.

The problem of maximizing the number of copies of a fixed graph in a
planar host graph goes back to Hakimi and Schmeichel
\cite{hakimi-schmeichel1979} and Alon and Caro
\cite{alon-caro1984}. The order of magnitude of $f(n,H)$ is now known
for every fixed planar graph $H$ by Huynh, Joret and Wood
\cite{huynh-joret-wood2022}; see also Gy\H{o}ri et
al.~\cite{gyori-etal2021} for earlier results on particular classes of graphs. Once the exponent of $n$ has been
determined, the next natural problem is to identify the leading
coefficient, or, more ambitiously, to determine the exact value. This
sharper problem is considerably more delicate.

For paths, Alon and Caro~\cite{alon-caro1984} determined $f(n,P_3)$
exactly, and Grzesik et al.~\cite{grzesik-etal2022} later determined $f(n,P_4)$ exactly. Ghosh et al.~\cite{ghosh-etal2021} proved that
$f(n,P_5)=n^3+O(n^2)$
and proposed the following conjecture for even length paths.

\begin{conjecture}[Ghosh et al.~\cite{ghosh-etal2021}]\label{conj:ghosh}
For every fixed integer $\ell\ge 2$,
\[
 f(n,P_{2\ell+1})
 =
 4\ell\left(\frac{n}{\ell}\right)^{\ell+1}
 +O(n^\ell).
\]
\end{conjecture}

The lower bound in \cref{conj:ghosh} was already supplied by Ghosh et
al.~\cite{ghosh-etal2021}.
Their construction is obtained from $C_{2\ell}$ by blowing up the vertices in one bipartition class into independent sets, with each new vertex retaining the two neighbors of the original vertex.
Choosing the $\ell$ blown-up classes to be as equal as possible gives the conjectured leading term.
It thus remains to prove the matching upper bound.

Cox and Martin~\cite{cox-martin2022} introduced a weighted optimization framework for attacking \cref{conj:ghosh}. They determined the leading
term when $\ell=3$, proving that $f(n,P_7)=\frac{4}{27}n^4+o(n^4),$
and reduced the general problem to the following weighted optimization
parameter.

Let $U$ be a finite set, and let $\mu$ be a probability measure on
the edges of the complete graph on $U$; thus
$\mu(uv)\ge 0$ and $\sum_{\{u,v\}\in\binom{U}{2}}\mu(uv)=1.$
For each $u\in U$, define its weighted degree by
$\mu(u)=\sum_{v\in U\setminus\{u\}}\mu(uv).$
For every integer $\ell\ge 2$, set
\[
 \rho(\mu;\ell)
 =
 \sum_{\substack{(u_1,\ldots,u_\ell)\in U^\ell\\
                  u_1,\ldots,u_\ell\ \mathrm{pairwise\ distinct}}}
 \mu(u_1)
 \left(\prod_{i=1}^{\ell-1}\mu(u_i u_{i+1})\right)
 \mu(u_\ell),
\]
and define
$\rho(\ell)= \sup_{U,\mu}\rho(\mu;\ell),$
where the supremum is taken over all finite sets $U$ and all probability
measures $\mu$ on $\binom{U}{2}$.
Cox and Martin proved that
\begin{equation}\label{eq:cox-martin-reduction}
 f(n,P_{2\ell+1})
 \le
 \left(\frac{\rho(\ell)}{2}+o(1)\right)n^{\ell+1}.
\end{equation}
They further proposed the following conjecture.

\begin{conjecture}[Cox--Martin~\cite{cox-martin2022}]\label{conj:cox-martin}
For every integer $\ell\ge 2$,
$\rho(\ell)=\frac{8}{\ell^\ell}.$
\end{conjecture}

Together with \eqref{eq:cox-martin-reduction},
\cref{conj:cox-martin} would give the conjectured leading coefficient
in \cref{conj:ghosh}. Cohen Antonir and Shapira
\cite{antonir-shapira2024} subsequently obtained the correct dependence
on $\ell$ up to an absolute multiplicative constant, proving that
\[
 f(n,P_{2\ell+1})
 =
 O\bigl(\ell^{-\ell}n^{\ell+1}\bigr).
\]
Their proof combined graph-theoretic reductions with arguments from
convex optimization.

For comparison, Cox and Martin also introduced an analogous
optimization problem for counting even cycles. The corresponding
conjecture has since been resolved by Lv, Gy\H{o}ri, He, Salia,
Tompkins and Zhu~\cite{lv-etal2024}. The path problem, however, remained
open: the exact leading coefficient in this family was known only for $P_5$ and $P_7$ \cite{heath-martin-wells2025}.

As a first consequence of our key weighted inequality, we determine the Cox–Martin parameter exactly.

\begin{theorem}\label{cor:rho}
For every integer $\ell\ge 2$,
$\rho(\ell)=\frac{8}{\ell^\ell}.$
\end{theorem}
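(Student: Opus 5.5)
Two inequalities are needed: a construction showing $\rho(\ell)\ge 8/\ell^\ell$, and the upper bound $\rho(\mu;\ell)\le 8/\ell^\ell$ for every finite $U$ and every probability measure $\mu$ on $\binom{U}{2}$.

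\textbf{Lower bound.} Take $U$ to be the vertex set of the cycle $C_\ell$ (for $\ell=2$, a single edge, handled separately), and put mass $1/\ell$ on each cycle edge. Every vertex then has weighted degree $2/\ell$. An ordered sequence of $\ell$ distinct vertices contributes only if consecutive vertices are adjacent, so it is a Hamiltonian path of $C_\ell$. There are $2\ell$ such ordered paths, and each has weight $(2/\ell)^2(1/\ell)^{\ell-1}$. Hence
\[
\rho(\mu;\ell)=2\ell\cdot\frac{4}{\ell^{2}}\cdot\frac{1}{\ell^{\ell-1}}=\frac{8}{\ell^\ell}.
\]
For $\ell=2$, put $\mu(uv)=1$ on one edge. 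Then the two ordered pairs give $2\cdot1\cdot1\cdot1=2=8/4$.

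\textbf{Upper bound, step 1: reduce to a sum over paths.} Write $\rho(\mu;\ell)$ as a sum over ordered paths $u_1\dots u_\ell$ of $\mu(u_1)\mu(u_\ell)\prod_i\mu(u_iu_{i+1})$. Expand each endpoint degree as $\mu(u_1)=\sum_{x\neq u_1}\mu(xu_1)$, and similarly for $\mu(u_\ell)$. Each term becomes a weighted walk $x,u_1,\dots,u_\ell,y$ of $\ell+1$ edges whose interior vertices are distinct. Split these walks into three types: $x=y$ (closed walks, i.e.\ cycles $C_{\ell+1}$); $x$ and $y$ both outside the path; and walks where $x$ or $y$ is an interior vertex of the path.

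\textbf{Step 2: prove the key weighted inequality.} The plan is to bound everything against the product structure via AM--GM. Since $\sum_e\mu(e)=1$, any product of $\ell$ edge weights on edges that are distinct as a multiset is controlled by $(1/\ell)^\ell$ times a combinatorial factor. I would prove the inequality by induction on $\ell$, using a local ``merging'' operation: if two vertices $u,v$ are both non-endpoints of the maximizing configuration, shift weight between $\mu(u\cdot)$ and $\mu(v\cdot)$. By multilinearity in the edge weights, $\rho$ is a polynomial that is linear along suitable directions, so at a maximizer the support can be compressed. The aim is to show that a maximizer is supported on a graph of maximum degree $2$ containing no path longer than needed, i.e.\ a cycle $C_m$ or a path. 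Then optimize the resulting one-variable problem: for $C_m$ with equal weights $1/m$, $m\ge\ell$, the value is $2m\cdot\frac{4}{m^2}\cdot m^{-(\ell-1)}=8m^{-\ell}$, which is maximized at $m=\ell$.

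\textbf{Main obstacle.} The hardest step is the structural compression, namely showing that high-degree vertices, such as a star-like hub, do not help. A hub inflates the degree factors $\mu(u_1)\mu(u_\ell)$ but cannot appear twice on a path. My first attempt would be a Lagrange/KKT analysis: at a maximizer every supported edge $e$ satisfies $\partial\rho/\partial\mu(e)=\ell\rho$ (by degree-$(\ell+1)$ homogeneity, with the endpoint factors counted). I would then sum these identities over a vertex's incident edges to bound its degree, using convexity arguments in the style of Cohen Antonir and Shapira. If this does not go through, the fallback is a symmetrization that replaces $\mu$ by its average over relabelings within each position class, reducing the problem to the blown-up $C_{2\ell}$ structure analysed by Ghosh et al.
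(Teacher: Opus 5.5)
Your lower bound is correct and matches the paper's equality cases: uniform weights on $C_\ell$ for $\ell\ge 3$, and a single edge for $\ell=2$. The upper bound, however, is the real content of the theorem, and it is not proved. Step~2 is a list of strategies, and the step you call the main obstacle is exactly the one left open. Several of the proposed ingredients also fail as stated:
\begin{itemize}
\item $\rho(\mu;\ell)$ is not multilinear in the edge weights, because each $\mu(xy)$ enters both the path product and the degree factors. A path starting $x,y,\dots$ contributes $\mu(x)\mu(xy)\cdots$, and a path with endpoints $x,y$ contributes $\mu(x)\mu(y)\cdots$; both terms are quadratic in $\mu(xy)$. So there is no evident linear direction along which weight can be shifted.
\item Every vertex is an endpoint of some paths and an interior vertex of others, so ``non-endpoints of the maximizing configuration'' is not well defined.
\item By Euler's identity the Lagrange multiplier is $(\ell+1)\rho$, not $\ell\rho$. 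Even with that corrected, summing stationarity conditions at a vertex gives no degree bound without an additional estimate, which you do not supply.
\item A bound of $(1/\ell)^\ell$ on a single product of $\ell$ edge weights cannot simply be summed over the unboundedly many paths.
\item The fallback symmetrization goes in the wrong direction for an upper bound. Averaging the extremal $C_\ell$ measure over relabelings of a large $U$ drives $\rho$ towards $0$. It also conflates the problem with the planar construction of Ghosh et al.
\item Even granting the structural claim, you only evaluate uniformly weighted cycles, not arbitrary weights on cycles and paths.
\end{itemize}

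The missing idea is that no structure theory of maximizers is needed. The paper proceeds in three steps.
\begin{enumerate}
\item Decouple the endpoint degrees. The inequality $\mu(u_1)\mu(u_\ell)\le\frac12\bigl(\mu(u_1)^2+\mu(u_\ell)^2\bigr)$, together with the reversal symmetry of the sum, gives $\rho(\mu;\ell)\le\sum_u\mu(u)^2A_{\ell-1}(u)$. Here $A_k(u)$ is the total weight of simple $k$-edge paths starting at $u$.
\item Bound $A_k(u)$ in a way that depends on the root's degree. Start from the basic estimate $R_k(F,r)\le(W/k)^k$, proved by induction on $k$: peel off the first edge, then apply AM--GM to $A\bigl(B/(k-1)\bigr)^{k-1}$, where $A=d_w(r)$ and $B=W-A$. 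Apply this after rescaling the edges at $u$ by $1/D$ and all other edges by $2/D$, with $D=2-\mu(u)$. Since a simple path from $u$ uses exactly one edge at $u$, this yields $A_k(u)\le 2\bigl((1-\mu(u)/2)/k\bigr)^k$.
\item Finish with a one-variable bound. Put $x_u=\mu(u)/2$, so that $\sum_u x_u=1$. Then $\rho(\mu;\ell)\le\frac{8}{(\ell-1)^{\ell-1}}\sum_u x_u^2(1-x_u)^{\ell-1}$, and $x(1-x)^{\ell-1}\le(\ell-1)^{\ell-1}/\ell^\ell$ gives $8/\ell^\ell$.
\end{enumerate}
The factor $(1-\mu(u)/2)^{k}$ in the rooted bound is exactly what penalizes hubs, which is the phenomenon you correctly identified as the obstacle.
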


Our main result confirms \cref{conj:ghosh}.

\begin{theorem}\label{thm:main}
For every fixed integer $\ell\ge 2$,
\[
 f(n,P_{2\ell+1})
 =
 4\ell\left(\frac{n}{\ell}\right)^{\ell+1}
 +O(n^\ell).
\]
\end{theorem}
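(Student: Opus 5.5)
The lower bound is the construction of Ghosh et al.\ described above: blow up one bipartition class of $C_{2\ell}$ into $\ell$ independent sets of sizes as equal as possible. A direct count of the paths $P_{2\ell+1}$ that go once around the cycle gives $4\ell(n/\ell)^{\ell+1}-O(n^\ell)$. So everything rests on the upper bound. The reduction \eqref{eq:cox-martin-reduction} combined with \cref{cor:rho} already yields $f(n,P_{2\ell+1})\le (4/\ell^\ell+o(1))n^{\ell+1}=4\ell(n/\ell)^{\ell+1}(1+o(1))$. The real task is to sharpen the $o(n^{\ell+1})$ to $O(n^\ell)$. I would therefore redo the Cox--Martin reduction quantitatively rather than use it as a black box.

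Let $G$ be a planar graph on $n$ vertices. A copy of $P_{2\ell+1}$ is $x_0y_1x_1y_2\cdots y_\ell x_\ell$. I would first discard the copies in which some $y_i$ has degree less than a large constant $K=K(\ell)$. Fixing a low-degree $y_i$ fixes its two neighbours up to $O(K^2)$ choices, so each such class has only $O(n^{\ell})$ members. Next I would discard copies in which some $y_i$ lies in the set $D$ of vertices of degree at least $\varepsilon n$. In a planar graph $|D|=O(1/\varepsilon)$, and the standard counting gives $O(n^\ell)$ for these copies too.

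Each remaining copy is determined by the ordered sequence $(x_0,\dots,x_\ell)$ together with a choice of common neighbour of each consecutive pair. Let $U$ be the set of vertices that are an $x_i$ in some copy. Let $w(uv)$ be the number of common neighbours of $u$ and $v$. Planarity implies that only $O(n)$ pairs have $w(uv)\ge 3$, and that $\sum_{uv}w(uv)\le 2n+O(|\text{pairs with } w\le2|)$; more precisely $\sum_{uv} w(uv)\le (1+o(1))\cdot 2n$ after removing pairs of small weight, whose total contribution is again $O(n^\ell)$. The main count then becomes
\[\sum_{x_0,\dots,x_\ell\ \text{distinct}} \prod_{i=1}^{\ell} w(x_{i-1}x_i)\le \sum_{x_1,\dots,x_{\ell-1}} W(x_1)\prod_i w(x_ix_{i+1})\,W(x_{\ell-1}),\]
where $W(u)=\sum_v w(uv)$. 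This has the shape of $\rho(\mu;\ell-1)$ with $\mu=w/\sum w$. The indexing has to be matched to the definition used in the paper; the endpoints $x_0,x_\ell$ produce the degree factors $\mu(u_1)$ and $\mu(u_\ell)$. Dividing by the factor $2$ that accounts for reversal, and applying \cref{cor:rho} to this finite $\mu$, yields
\[N(G,P_{2\ell+1})\le \tfrac12\rho(\ell)\Bigl(\sum w\Bigr)^{\ell+1}+O(n^\ell).\]

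The main obstacle is the estimate $\sum w\le 2n+O(1)$ and not merely $(2+o(1))n$. A loss of $o(n)$ in this sum becomes $o(n^{\ell+1})$ in the final bound. To handle it, I would bound $\sum_{uv} w(uv)$ by the number of paths $P_3$ whose midpoint has degree at most $\varepsilon n$. I would then use Euler's formula on the bipartite-like auxiliary structure formed by middle vertices of degree $\ge K$ attached to pairs. A planar bipartite graph with parts $Y$ (the middle vertices) and $X$ has at most $2(|X|+|Y|)$ edges. Each $y$ of degree $d$ contributes $\binom d2$ pairs, while the number of distinct pairs realized with multiplicity is controlled. I would apply the weighted inequality behind \cref{cor:rho} in its stability form: it is tight only near the blown-up cycle, where $\sum w=2n-O(1)$ exactly. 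This should give the additive error $O(n^\ell)$.
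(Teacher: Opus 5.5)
You have the lower bound right. You are also right that the Cox--Martin reduction combined with \cref{cor:rho} only gives a $(1+o(1))$ factor, and the removal of copies with a low-degree $y_i$ is fine. The upper-bound reduction fails, however, because you have put the weighted graph on the wrong class of the path. In the extremal blow-up of $C_{2\ell}$, the odd-position vertices $y_1,\dots,y_\ell$ are the hubs, of degree about $2n/\ell$, and the $x_i$ are the degree-$2$ blown-up vertices. So your second discard (copies with some $y_i$ of degree at least $\varepsilon n$, claimed to be $O(n^\ell)$) removes all $\Theta(n^{\ell+1})$ extremal copies once $\varepsilon<2/\ell$. Removing pairs of small weight does the same: every $x_i$ there has degree $2$, so every consecutive pair $x_{i-1}x_i$ has codegree at most $2$. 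Nor is $\sum_{uv}w(uv)=\sum_y\binom{d(y)}{2}$ of order $n$; for this very graph it is $\Theta(n^2)$. Your own display exposes the inconsistency. It has only $\ell$ weight factors, so it has the shape of $\rho(\mu;\ell-1)$ and scales like $(\sum w)^{\ell}$. If $\sum w=O(n)$ held, you would have proved $N(G,P_{2\ell+1})=O(n^\ell)$, contradicting the lower bound. The indexing mismatch you set aside is this inversion, not a bookkeeping issue.

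The right weights live on pairs of hubs. $m_{uv}$ should count even-position vertices attached to $\{u,v\}$, so that each vertex of $G$ contributes at most one unit and the total mass is at most $n$ with no $o(n)$ loss. That is exactly the obstacle you name, and your last paragraph does not overcome it. No stability form of the weighted inequality is proved, and the assertion $\sum w=2n-O(1)$ near the blown-up cycle is false for codegree weights. Nothing explains how near-extremal structure would remove the $o(n)$ loss.

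The missing idea in the paper is an orientation. By Hakimi's theorem and the planar edge bounds, the bipartite double cover $B(G)$ has an orientation with out-degree at most $2$ on $V_L$ and at most $6$ on $V_R$. Lift each ordered path alternately, with even positions in $V_L$. If the even positions are not all sources, the path has at most $\ell$ sources, and the bounded out-degrees give $O_\ell(n^\ell)$ such paths. Otherwise every edge points from an even position to an odd one. Then each internal $x_i$ has out-neighbourhood exactly $\{u_i,u_{i+1}\}$, and each endpoint has out-neighbourhood $\{u\}$ or $\{u,v\}$, where $u$ is its path-neighbour. This defines $m_{uv}$ and $p_u$ with $M\le n$ exactly, and bounds the number of unordered copies by $\Phi_\ell(m,p)+O_\ell(n^\ell)$. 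The extra endpoint weights $p_u$ are why the paper needs \cref{thm:weighted}, with its private-leaf construction, rather than \cref{cor:rho} alone.
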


\section{Preliminary lemma}\label{sec:weighted}

A weighted graph is a finite loopless simple graph whose edges have nonnegative real weights. Missing edges may equivalently be treated as edges of weight zero. For a weighted graph $F$, a vertex $r\in V(F)$ and an integer $k\ge 1$, define
\[
 R_k(F,r)
 =\sum_{\substack{r=v_0,v_1,\ldots,v_k\\v_0,\ldots,v_k\text{ pairwise distinct}}}
   \prod_{i=1}^{k}w(v_{i-1}v_i).
\]
Thus $R_k(F,r)$ is the total weight of ordered simple $k$-edge paths that start at $r$.

The following rooted-path estimate is not new. It is the case $P_{(k,0)}$ of Cohen Antonir and Shapira~\cite[Lemma~2.5]{antonir-shapira2024}, after adjoining an isolated marked vertex and scaling a probability measure to total mass $W$. We include a short direct proof for completeness.

\begin{lemma}~\cite{antonir-shapira2024}\label{lem:rooted-path}
Let $F$ be a nonnegative weighted graph with total edge weight
$W=\sum_{e\in E(F)}w(e).$
Then, for every $r\in V(F)$ and every integer $k\ge 1$,
\[
 R_k(F,r)\le \left(\frac{W}{k}\right)^k.
\]
\end{lemma}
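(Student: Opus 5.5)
Our plan is to prove the bound by induction on $k$, using the stronger statement that for every nonnegative weighted graph $F$ with total weight $W$, every root $r$ and every $k\ge 1$ we have $R_k(F,r)\le (W/k)^k$. For $k=1$ the claim reads $R_1(F,r)=\sum_{v}w(rv)=\deg_w(r)\le W$, which is immediate.

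For the inductive step we decompose a path according to its first edge. Write $a_v=w(rv)$ and $A=\sum_v a_v=\deg_w(r)$. Every ordered simple $k$-edge path from $r$ is $r$ followed by a first vertex $v$ and then a simple $(k-1)$-edge path starting at $v$ that avoids $r$. Let $F-r$ be the weighted graph obtained by deleting $r$ together with all edges incident to it; its total weight is $W-A$. This gives
\[
 R_k(F,r)=\sum_{v\ne r} a_v\, R_{k-1}(F-r,v)\le \sum_{v} a_v\left(\frac{W-A}{k-1}\right)^{k-1}
 = A\left(\frac{W-A}{k-1}\right)^{k-1},
\]
where we applied the induction hypothesis to $F-r$ rooted at $v$ (when $k-1\ge 1$). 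The key point is that deleting $r$ removes exactly weight $A$. This is the step that makes the argument work. Merely bounding $R_{k-1}$ in $F$ would lose this budget.

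It remains to maximize $g(A)=A\,(W-A)^{k-1}/(k-1)^{k-1}$ over $0\le A\le W$. By AM--GM applied to the $k$ numbers $A,\frac{W-A}{k-1},\dots,\frac{W-A}{k-1}$, whose sum is $W$, their product is at most $(W/k)^k$. Equivalently, elementary calculus gives the maximum at $A=W/k$. This yields $R_k(F,r)\le (W/k)^k$.

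We do not expect any step to be a real obstacle. The only care needed is to verify that the first-edge decomposition is exact: the remaining path must avoid $r$ and be simple, which is precisely a simple path in $F-r$. We must also check that the induction hypothesis is applied to a graph whose total weight is $W-A$ rather than $W$. Degenerate cases such as $A=0$ or $F-r$ having no edges are covered automatically.
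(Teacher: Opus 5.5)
Your proof is correct and matches the paper's argument essentially step for step. Both use induction on $k$ with the exact first-edge decomposition $R_k(F,r)=\sum_v w(rv)\,R_{k-1}(F-r,v)$, apply the induction hypothesis to $F-r$ of total weight $W-d_w(r)$, and finish with AM--GM to get $A\bigl(\frac{W-A}{k-1}\bigr)^{k-1}\le (W/k)^k$.
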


\begin{proof}
We proceed by induction on $k$. For $k=1$, we have $R_1(F,r)=d_w(r)\le W,$
where $d_w(r)$ denotes the weighted degree of $r$.
Now suppose that $k\ge 2$ and that the assertion holds for $k-1$.
Put
$A=d_w(r)$ and $B=W-A.$
Then $B$ is the total edge weight of the weighted graph $F-r$.
Decomposing each ordered simple $k$-edge path starting at $r$
according to its first edge gives
\[
R_k(F,r)=\sum_{\substack{v\in V(F)\\rv\in E(F)}}w(rv)R_{k-1}(F-r,v).
\]
By the induction hypothesis, for every neighbor $v$ of $r$,
\[
R_{k-1}(F-r,v)\le\left(\frac{B}{k-1}\right)^{k-1}.
\]
Therefore,
\[
R_k(F,r)\le
\sum_{\substack{v\in V(F)\\rv\in E(F)}}w(rv)\left(\frac{B}{k-1}\right)^{k-1}=
A\left(\frac{B}{k-1}\right)^{k-1}
\le
\left(\frac{A+B}{k}\right)^k=
\left(\frac{W}{k}\right)^k.
\]
\end{proof}

\section{Proof of the main result}\label{sec:orientation}

We start this section with the following degree-sensitive consequence.

\begin{lemma}\label{cor:degree-sensitive}
Let $F$ be a nonnegative weighted graph of total edge weight $W$, and let $r\in V(F)$. Then, for every integer $k\ge 1$,
\[
 R_k(F,r)
 \le 2\left(\frac{W-d_w(r)/2}{k}\right)^k,
\]
where $d_w(r)$ denotes the weighted degree of $r$.
\end{lemma}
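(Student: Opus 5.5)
The plan is to reuse the first-edge decomposition from the proof of \cref{lem:rooted-path}, but to split the root's weighted degree in half before applying AM--GM. Write $A=d_w(r)$ and $B=W-A$, so that $B$ is the total edge weight of $F-r$ and $W-d_w(r)/2=A/2+B$.

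First, the case $k=1$. Here $R_1(F,r)=A$, and the claim reads $A\le 2W-A$. This holds because $A\le W$: every edge at $r$ contributes its weight once to $A$ and once to $W$.

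For $k\ge 2$, decompose each ordered simple $k$-edge path from $r$ by its first edge, exactly as in the proof of \cref{lem:rooted-path}:
\[
R_k(F,r)=\sum_{rv\in E(F)} w(rv)\,R_{k-1}(F-r,v).
\]
Apply \cref{lem:rooted-path} to $F-r$, which has total weight $B$. This gives $R_{k-1}(F-r,v)\le (B/(k-1))^{k-1}$, and hence
\[
R_k(F,r)\le A\left(\frac{B}{k-1}\right)^{k-1}=2\cdot\frac{A}{2}\left(\frac{B}{k-1}\right)^{k-1}.
\]
Next, apply AM--GM to the $k$ nonnegative numbers $A/2,\,B/(k-1),\ldots,B/(k-1)$, with $k-1$ copies of the second. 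Their product is at most
\[
\left(\frac{A/2+B}{k}\right)^k .
\]
Multiplying by $2$ gives exactly $2\bigl((W-d_w(r)/2)/k\bigr)^k$.

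There is no real obstacle. The only idea needed is to split $A$ as $2\cdot(A/2)$ before invoking AM--GM, instead of combining $A$ with $B$ directly as in \cref{lem:rooted-path}. Degenerate cases such as $A=0$ or $B=0$ are covered automatically, since AM--GM holds for nonnegative reals.
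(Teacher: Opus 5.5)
Your proof is correct, but it is organized differently from the paper's. You reopen the inductive step of \cref{lem:rooted-path}. You peel off the first edge, apply the lemma at length $k-1$ to $F-r$ (total weight $B=W-A$), and then apply AM--GM to $A/2$ and $k-1$ copies of $B/(k-1)$; the case $k=1$ reduces to $A\le W$.

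The paper instead applies \cref{lem:rooted-path} once, at length $k$, to a reweighted copy of the whole graph. With $D=2W-d_w(r)$, it rescales edges at $r$ to $w(e)/D$ and all other edges to $2w(e)/D$, so the new total weight is $1$. It then uses the fact that every simple path from $r$ contains exactly one edge at $r$ to get $\prod w=D^k2^{-(k-1)}\prod y$. The lemma then gives $R_k(F,r)\le D^k/(2^{k-1}k^k)$ directly.

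What each buys:
\begin{itemize}
\item The reweighting treats all $k\ge 1$ uniformly and never re-derives the first-edge recursion. It needs a separate $W=0$ case for the normalization.
\item Your version needs no normalization and no $W=0$ case. It exhibits the stronger intermediate bound $A\bigl(B/(k-1)\bigr)^{k-1}$, and it makes visible that the factor $2$ comes exactly from writing $A=2\cdot(A/2)$ before AM--GM.
\end{itemize}

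The two arguments are the same estimate in different packaging. If you unwind the proof of \cref{lem:rooted-path} inside the paper's reweighted graph, the root has weighted degree $A/D$ and the rest has total weight $2B/D$. Every quantity is then your corresponding quantity scaled by $(2/D)^k$, and you recover exactly your chain of inequalities.
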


\begin{proof}
The assertion is immediate if $W=0$, so assume $W>0$. Put $D=2W-d_w(r)>0$ and define new edge weights by
\[
 y(e)=
 \begin{cases}
  w(e)/D, & e\text{ is incident with }r,\\
  2w(e)/D, & e\text{ is not incident with }r.
 \end{cases}
\]
Since the edges incident with $r$ have total weight $d_w(r)$,
\[\sum_{e\in E(F)}y(e)=\frac{d_w(r)+2(W-d_w(r))}{D}=1.
\]
Every simple path beginning at $r$ contains exactly one edge incident with $r$, namely its first edge, and hence every such $k$-edge path $P$ satisfies
\[
 \prod_{e\in E(P)}w(e)
 =\frac{D^k}{2^{k-1}}\prod_{e\in E(P)}y(e).
\]

Therefore,
\[
R_k(F,r)
=
\sum_{\substack{r=v_0,v_1,\ldots,v_k\\
v_0,\ldots,v_k\text{ pairwise distinct}}}
\prod_{i=1}^{k}w(v_{i-1}v_i)
=
\frac{D^k}{2^{k-1}}
\sum_{\substack{r=v_0,v_1,\ldots,v_k\\
v_0,\ldots,v_k\text{ pairwise distinct}}}
\prod_{i=1}^{k}y(v_{i-1}v_i).
\]
Since the total edge weight with respect to $y$ is $1$, applying
\cref{lem:rooted-path} to the weights $y$ gives
\[
 \sum_{\substack{r=v_0,v_1,\ldots,v_k\\
 v_0,\ldots,v_k\text{ pairwise distinct}}}
 \prod_{i=1}^{k}y(v_{i-1}v_i)
 \le
 \left(\frac{1}{k}\right)^k.
\]
Consequently,
\[
 R_k(F,r)
 \le
 \frac{D^k}{2^{k-1}k^k}
 =
 2\left(\frac{W-d_w(r)/2}{k}\right)^k.
\]
\end{proof}

We next prove a sharp weighted path inequality that will be the key tool for the upper bound. We now introduce the relevant weight notation.

Let $U$ be a finite set. For distinct $u,v\in U$, let $m_{uv}=m_{vu}\ge 0$, and let $p_u\ge 0$ for $u\in U$. Put
\[
 s_u=p_u+\sum_{v\in U\setminus\{u\}}m_{uv}
 \qquad\text{and}\qquad
 M=\sum_{u\in U}p_u+\sum_{\{u,v\}\in\binom{U}{2}}m_{uv}.
\]
For $\ell\ge 2$, define
\begin{equation}
 \Phi_\ell(m,p)
 =\frac12
 \sum_{\substack{(u_1,\ldots,u_\ell)\in U^\ell\\u_1,\ldots,u_\ell\text{ pairwise distinct}}}
 s_{u_1}s_{u_\ell}\prod_{i=1}^{\ell-1}m_{u_i u_{i+1}}.
 \label{eq:phi}
\end{equation}

The quantity $\Phi_\ell$ sums, over all ordered simple paths of length $\ell-1$ (i.e. with $\ell$ vertices), the product of the edge weights, multiplied by the endpoint weights $s_{u_1}$ and $s_{u_\ell}$. The factor $1/2$ is included for later convenience.

We prove a sharp upper bound for a weighted sum of ordered simple paths.
It will be applied in two ways: first, it immediately implies \cref{conj:cox-martin} for the optimization parameter $\rho(\ell)$; second, it will be used in proof of \cref{thm:main}.

\begin{lemma}\label{thm:weighted}
For every integer $\ell\ge 2$,
$\Phi_\ell(m,p)\le \frac{4}{\ell^\ell}M^{\ell+1}.$
The constant is best possible.
\end{lemma}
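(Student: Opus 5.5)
The plan is to reduce the two-endpoint sum $\Phi_\ell$ to a rooted-path count from one endpoint, and then apply \cref{cor:degree-sensitive}. First fix the starting vertex $u=u_1$ and write
\[
 \Phi_\ell(m,p)=\frac12\sum_{u\in U}s_u\,T_u,\qquad
 T_u=\sum_{\substack{u=u_1,\ldots,u_\ell\\ \text{pairwise distinct}}}
 \Bigl(\prod_{i=1}^{\ell-1}m_{u_iu_{i+1}}\Bigr)s_{u_\ell}.
\]
Next, expand the terminal factor as $s_{u_\ell}=p_{u_\ell}+\sum_{v\neq u_\ell}m_{u_\ell v}$. The aim is to read $T_u$ as (at most) a rooted $\ell$-edge path weight $R_\ell(F,u)$ in an auxiliary weighted graph $F$. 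This graph is built from the complete graph on $U$ with weights $m$, together with a pendant vertex $x_w$ joined to each $w\in U$ by an edge of weight $p_w$.

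The total edge weight of $F$ is exactly $M$, and the weighted degree of the root is $d_w(u)=s_u$. Hence \cref{cor:degree-sensitive} would give
\[
 T_u\le 2\Bigl(\frac{M-s_u/2}{\ell}\Bigr)^{\ell}.
\]
Combining, and using $\sum_u s_u\le 2M$, I get
\[
 \Phi_\ell(m,p)\le\sum_u s_u\Bigl(\frac{M-s_u/2}{\ell}\Bigr)^{\ell}\le\frac{M^\ell}{\ell^\ell}\sum_u s_u\le\frac{2M^{\ell+1}}{\ell^\ell}.
\]
This is even a factor $2$ better than claimed. That margin signals that the identification of $T_u$ with $R_\ell(F,u)$ is too optimistic, and this is exactly where the main obstacle lies.

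The gap is the following. In $T_u$ the final step $u_\ell\to v$ coming from the sum in $s_{u_\ell}$ ranges over \emph{all} $v\ne u_\ell$, including vertices already on the path. Those terms form ``lollipops'' (a path closing back onto itself), which $R_\ell(F,u)$ does not count. So I will split $T_u=T_u^{\mathrm{simple}}+T_u^{\mathrm{loop}}$. The simple part is bounded by $R_\ell(F,u)$ as above. For the loop part, I intend to use the spare factor $2$.

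The tool for the loop part is a crude inequality. For a fixed $(\ell-1)$-edge path $P$ from $u$ to $u_\ell$, the weight $\sum_{v\in V(P)}m_{u_\ell v}$ is at most the mass of edges at $u_\ell$. Moreover, these edges are disjoint from the edges of $P$ other than its last edge, and from the edges at $u$ other than $uu_2$. An AM--GM argument over the $\ell-1$ path edges plus this extra mass should then bound $\sum_P w(P)\cdot(\text{loop mass})$ by another term of the form $\bigl((M-s_u/2)/\ell\bigr)^\ell$. If this works, the two pieces together give $T_u\le 4\bigl((M-s_u/2)/\ell\bigr)^\ell$, and the computation above yields $\Phi_\ell\le 4M^{\ell+1}/\ell^\ell$.

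As a fallback, if the AM--GM bookkeeping fails, I would instead duplicate $U$. I would build a graph whose last step must go to a fresh copy $v'$ of $v$, and then run the induction of \cref{lem:rooted-path} directly on this layered graph. The last layer then contributes total weight at most $M$ rather than a share of it.

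For sharpness, I take the $2\ell$-cycle with $p\equiv0$, $m=M/(2\ell)$ on each cycle edge. Then every $s_u=M/\ell$, and there are $2\cdot 2\ell$ ordered paths with $\ell$ vertices. Hence
\[
 \Phi_\ell=\frac12\cdot 4\ell\cdot\Bigl(\frac{M}{\ell}\Bigr)^2\Bigl(\frac{M}{2\ell}\Bigr)^{\ell-1},
\]
which is not $4M^{\ell+1}/\ell^\ell$. So the extremal configuration must instead be the weighted version of the blown-up $C_{2\ell}$ from the introduction. That is $C_\ell$ on $U$ with $m=M/\ell$ on each edge, so that the blown-up vertices correspond to the edges of $C_\ell$ and every $s_u=2M/\ell$. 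This gives
\[
 \Phi_\ell=\frac12\cdot 2\ell\cdot\frac{4M^2}{\ell^2}\cdot\Bigl(\frac{M}{\ell}\Bigr)^{\ell-1}=\frac{4M^{\ell+1}}{\ell^\ell}.
\]
I would verify this equality case at the end.
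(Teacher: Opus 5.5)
\textbf{There is a genuine gap in the loop part.} Your simple part is fine: $T_u^{\mathrm{simple}}\le R_\ell(F,u)$. The trouble is that the target inequality $T_u\le 4\bigl((M-s_u/2)/\ell\bigr)^\ell$ is false, so no bookkeeping can deliver it.

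\textbf{Where the disjointness claim fails.} For $\ell\ge 3$ the loop vertex may be $v=u_1=u$. The closing edge $u_\ell u$ is then an edge at $u$ other than $uu_2$, so it is counted in $s_u$. The retracing term $v=u_{\ell-1}$ squares the last edge weight, so it is not ``path edges plus disjoint extra mass'' either.

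\textbf{Counterexamples.}
\begin{itemize}
\item In your own extremal configuration ($C_\ell$, $m=M/\ell$, $p\equiv 0$), every extension of a Hamiltonian path closes up. Hence $T_u=T_u^{\mathrm{loop}}=4(M/\ell)^\ell$, which exceeds $4(M/\ell)^\ell(1-1/\ell)^\ell$.
\item Even the weaker bound $T_u\le 4(M/\ell)^\ell$, which is all your final summation actually uses, is false. For $\ell=3$ take a triangle with $m_{uv}=m_{uw}=M/5$, $m_{vw}=3M/5$ and $p\equiv 0$. Then $s_u=2M/5$, $s_v=s_w=4M/5$, and $T_u=2\cdot\frac{M}{5}\cdot\frac{3M}{5}\cdot\frac{4M}{5}=\frac{24}{125}M^3>\frac{4}{27}M^3$.
\end{itemize}
Five roots with $s_u=2M/5$ already saturate $\sum_u s_u\le 2M$. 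So no bound on $T_u$ in terms of $s_u$ and $M$ alone, summed as you do, can give the lemma. A large $T_u$ is compensated only by a small $s_u$. Your duplicated-layer fallback is again a rooted bound on a single $T_u$, so it meets the same obstruction.

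\textbf{The missing idea: treat the two endpoints symmetrically instead of peeling one off.}
\begin{itemize}
\item The paper applies $s_{u_1}s_{u_\ell}\le\frac12(s_{u_1}^2+s_{u_\ell}^2)$ and the reversal bijection to get $2\Phi_\ell\le\sum_u s_u^2A_{\ell-1}(u)$. Here $A_{\ell-1}(u)$ is the $m$-weight of $(\ell-1)$-edge simple paths inside $U$ starting at $u$, so no loops ever arise.
\item \cref{cor:degree-sensitive} with $k=\ell-1$ then gives $\Phi_\ell\le\sum_u s_u^2\bigl((M-s_u/2)/(\ell-1)\bigr)^{\ell-1}$.
\item With $x_u=s_u/(2M)$, the estimate $x^2(1-x)^{\ell-1}\le x\,(\ell-1)^{\ell-1}/\ell^\ell$ and $\sum_u x_u\le 1$ finish the proof.
\end{itemize}
Pairing the weight $s_u^2$ against $(M-s_u/2)^{\ell-1}$ is what makes the one-variable optimization tight.

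\textbf{Sharpness.} Your example is correct for $\ell\ge 3$. For $\ell=2$ you need a single edge of weight $M$ instead.
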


\begin{proof}
Construct a weighted graph $\widehat F$ whose original vertex set is $U$. Give the edge $uv$ weight $m_{uv}$ for every $\{u,v\}\in\binom{U}{2}$. For every $u\in U$, add a private leaf $u^*$ and give $uu^*$ weight $p_u$. The total edge weight of $\widehat F$ is $M$, and every original vertex $u$ has weighted degree $s_u$.

For $k\ge 1$ and $u\in U$, let
\[
 A_k(u)=
 \sum_{\substack{u=u_0,u_1,\ldots,u_k\in U\\u_0,\ldots,u_k\text{ pairwise distinct}}}
 \prod_{i=1}^{k}m_{u_{i-1}u_i}.
\]
These paths form a subset of the simple paths in $\widehat F$ that start at $u$. By \cref{cor:degree-sensitive},
\begin{equation}
 A_k(u)\le 2\left(\frac{M-s_u/2}{k}\right)^k.
 \label{eq:Ak}
\end{equation}

Using $ab\le \frac{a^2+b^2}{2}$ gives
\[
2\Phi_\ell
\le \frac12
\sum_{\substack{(u_1,\ldots,u_\ell)\\ \text{pairwise distinct}}}
\bigl(s_{u_1}^2+s_{u_\ell}^2\bigr)
\prod_{i=1}^{\ell-1}m_{u_i u_{i+1}}.
\]

Now consider the reversal map
\[
\sigma: (u_1,u_2,\dots,u_\ell) \longmapsto (u_\ell,u_{\ell-1},\dots,u_1).
\]
This map is a bijection on the set of ordered pairwise distinct $\ell$-tuples. Moreover, the edge weights are symmetric ($m_{uv}=m_{vu}$). Hence,
\[
\sum_{\substack{(u_1,\ldots,u_\ell)\\ \text{pairwise distinct}}} s_{u_1}^2 \prod_{i=1}^{\ell-1}m_{u_i u_{i+1}}
=
\sum_{\substack{(u_1,\ldots,u_\ell)\\ \text{pairwise distinct}}} s_{u_\ell}^2 \prod_{i=1}^{\ell-1}m_{u_i u_{i+1}}.
\]
Then, we have
\[
2\Phi_\ell
\le
\sum_{\substack{(u_1,\ldots,u_\ell)\\ \text{pairwise distinct}}} s_{u_1}^2 \prod_{i=1}^{\ell-1}m_{u_i u_{i+1}}.
\]
Finally, group the remaining sum by the starting vertex $u_1=u$:
\[
\sum_{\substack{(u_1,\ldots,u_\ell)\\ \text{pairwise distinct}}} s_{u_1}^2 \prod_{i=1}^{\ell-1}m_{u_i u_{i+1}}
=
\sum_{u\in U} s_u^2
\sum_{\substack{u=w_0,w_1,\dots,w_{\ell-1}\\ \text{pairwise distinct}}}
\prod_{i=1}^{\ell-1} m_{w_{i-1}w_i}.
\]
Therefore,
\[
2\Phi_\ell(m,p)
\le
\sum_{u\in U}s_u^2A_{\ell-1}(u),
\]
Applying \eqref{eq:Ak} with $k=\ell-1$ yields
\begin{equation}
 \Phi_\ell(m,p)
 \le \sum_{u\in U}s_u^2
 \left(\frac{M-s_u/2}{\ell-1}\right)^{\ell-1}.
 \label{eq:phi-intermediate}
\end{equation}

If $M=0$, the result is immediate. Assume $M>0$ and set $x_u=s_u/(2M)$. Since
\[
 \sum_{u\in U}s_u
 =\sum_{u\in U}p_u+2\sum_{\{u,v\}}m_{uv}
 =2M-\sum_{u\in U}p_u\le 2M,
\]
we have $\sum_u x_u\le 1$. Rewriting \eqref{eq:phi-intermediate},
\begin{equation}
 \Phi_\ell(m,p)
 \le \frac{4M^{\ell+1}}{(\ell-1)^{\ell-1}}
 \sum_{u\in U}x_u^2(1-x_u)^{\ell-1}.
 \label{eq:normalized}
\end{equation}
The function $x(1-x)^{\ell-1}$ on $[0,1]$ has maximum
\[
 \frac{1}{\ell}\left(\frac{\ell-1}{\ell}\right)^{\ell-1}
 =\frac{(\ell-1)^{\ell-1}}{\ell^\ell}.
\]
Consequently,
\[
 \sum_{u\in U}x_u^2(1-x_u)^{\ell-1}
 \le \frac{(\ell-1)^{\ell-1}}{\ell^\ell}\sum_{u\in U}x_u
 \le \frac{(\ell-1)^{\ell-1}}{\ell^\ell}.
\]
Substitution into \eqref{eq:normalized} proves the inequality.

For $\ell\ge 3$, equality is attained by taking $p_u=0$ and assigning weight $M/\ell$ to every edge of a cycle $C_\ell$. For $\ell=2$, equality is attained by a single edge of weight $M$.
\end{proof}

Let \( \mu \) be a probability measure on the edges of a complete graph on vertex set \( U \), and set $\bar{\mu}(u) = \sum_{v \neq u} \mu(uv).$

\begin{proof}[\textbf{Proof of Theorem~\ref{cor:rho}}]
Apply \cref{thm:weighted} with $p_u=0$, $m_{uv}=\mu(uv)$ and $M=1$. Then $s_u=\overline\mu(u)$ and
\[
 \rho(\mu;\ell)=2\Phi_\ell(m,0)\le \frac{8}{\ell^\ell}.
\]
For $\ell\ge 3$, equality is attained by the uniform probability measure on $E(C_\ell)$; for $\ell=2$, it is attained by the measure concentrated on a single edge.
\end{proof}

To prove \cref{thm:main}, we use the following classical orientation criterion. It is the upper-capacity form of Hakimi's theorem on prescribed outdegrees~\cite{hakimi1965}.

\begin{lemma}[Hakimi~\cite{hakimi1965}]\label{thm:hakimi}
Let $H$ be a finite graph and let $b:V(H)\to\N\cup\{0\}$. There exists an orientation of $H$ satisfying
$d^+(v)\le b(v)$ for $(v\in V(H))$
if and only if
\begin{equation}
 e_H(Z)\le \sum_{z\in Z}b(z)
 \qquad\text{for every }Z\subseteq V(H).
 \label{eq:hakimi}
\end{equation}
\end{lemma}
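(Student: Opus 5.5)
The plan is to prove Hakimi's criterion as a max-flow/min-cut statement, or equivalently via Hall's theorem for a bipartite assignment problem. Orienting $H$ so that $d^+(v)\le b(v)$ amounts to assigning each edge $e=uv$ to one of its endpoints (the tail), so that each vertex $v$ receives at most $b(v)$ edges.

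\emph{Necessity.} Fix any such orientation and any $Z\subseteq V(H)$. Each edge with both ends in $Z$ has its tail in $Z$. Hence
\[
e_H(Z)\le\sum_{z\in Z}d^+(z)\le\sum_{z\in Z}b(z).
\]

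\emph{Sufficiency.} Build a bipartite graph $B$ with one part $E(H)$ and the other part consisting of $b(v)$ copies of each vertex $v$. Join an edge $e$ to every copy of each endpoint of $e$. A matching of $B$ saturating $E(H)$ is exactly an admissible orientation: orient $e$ away from the vertex whose copy it is matched to. By Hall's theorem, it suffices to show $|N_B(S)|\ge|S|$ for every $S\subseteq E(H)$. Let $Z=V(S)$ be the set of endpoints of edges in $S$. Then $N_B(S)$ consists of all copies of vertices in $Z$, so $|N_B(S)|=\sum_{z\in Z}b(z)$. On the other hand, $S\subseteq E(H[Z])$, so $|S|\le e_H(Z)$. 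Condition \eqref{eq:hakimi} then gives Hall's condition, and we obtain the saturating matching and hence the orientation.

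The only point needing care is verifying that Hall's condition reduces to \eqref{eq:hakimi} by passing from $S$ to the vertex set it spans. That reduction is immediate, since enlarging $S$ to all edges inside $Z$ only increases $|S|$ and leaves $N_B(S)$ unchanged, so no step is a real obstacle. If $b(v)=0$ for some $v$, that vertex simply has no copies, and the argument goes through unchanged. Alternatively, one could run an augmenting-path argument directly on an orientation that minimizes the total excess $\sum_v\max(0,d^+(v)-b(v))$. The set reachable by directed paths from an overloaded vertex would then violate \eqref{eq:hakimi}. I would present the Hall version for brevity.
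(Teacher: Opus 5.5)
Your proof is correct. The paper gives no proof of this lemma at all. It quotes the statement as the upper-capacity form of Hakimi's 1965 theorem on prescribed outdegrees and uses it as a black box in the orientation lemma for the bipartite double cover.

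Your route is the standard self-contained one. You encode an admissible orientation as a matching of $E(H)$ into a pool containing $b(v)$ copies of each vertex $v$. Hall's condition for $S\subseteq E(H)$ then reduces to the hypothesis with $Z=V(S)$, since $|N_B(S)|=\sum_{z\in V(S)}b(z)$ while $|S|\le e_H(V(S))$. Necessity is the one-line count of tails you give.

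The citation buys the paper brevity. Your argument makes the paper's only orientation tool fully self-contained. It also proves the upper-bounded version directly, with no need to pass through an exact-outdegree formulation, from which the capacity version would need a small extra step.

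Your alternative sketch is sound as well. Take an orientation minimizing total excess. The set $R$ reachable from an overloaded vertex is closed under out-neighbours. It contains no vertex with $d^+<b$, since otherwise reversing a directed path would lower the excess. Hence $e_H(R)=\sum_{z\in R}d^+(z)>\sum_{z\in R}b(z)$, contradicting the hypothesis.
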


Let $G$ be a graph. Its bipartite double cover $B=B(G)$ has vertex set
$V(B)=V_L\mathbin{\cup}V_R,$
where $V_L$ and $V_R$ are two copies of $V(G)$. For every edge $uv\in E(G)$, the graph $B$ contains the two edges $u_Lv_R$ and $v_Lu_R$. We emphasize that $B(G)$ need not be planar even when $G$ is planar.

\begin{lemma}\label{lem:asymmetric-orientation}
If $G$ is planar, then $B(G)$ has an orientation satisfying
$d^+(x)\le 2$ for $x\in V_L$ and $d^+(y)\le 6$ for $y\in V_R$.
\end{lemma}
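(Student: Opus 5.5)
We apply Hakimi's criterion (\cref{thm:hakimi}) to $H=B(G)$ with $b(x)=2$ for $x\in V_L$ and $b(y)=6$ for $y\in V_R$. It then suffices to verify \eqref{eq:hakimi}: for every $Z\subseteq V(B)$ we need $e_B(Z)\le 2|Z\cap V_L|+6|Z\cap V_R|$. Write $Z=X_L\cup Y_R$, where $X,Y\subseteq V(G)$ are the underlying vertex sets. Every edge of $B[Z]$ has the form $u_Lv_R$ with $u\in X$, $v\in Y$ and $uv\in E(G)$. Thus $e_B(Z)$ counts ordered pairs $(u,v)\in X\times Y$ with $uv\in E(G)$. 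Hence
\[
 e_B(Z)= e_G(X\setminus Y,\,Y\setminus X) + e_G(X\cap Y,\,Y\setminus X) + e_G(X\setminus Y,\,X\cap Y) + 2e_G(X\cap Y),
\]
where $e_G(S,T)$ counts edges between disjoint sets $S,T$.

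Rather than bounding these terms one by one, it is cleaner to bound $e_B(Z)$ by the number of edges of one planar graph, plus an extra copy of the edges inside $X\cap Y$. The edges counted above all lie in $G[X\cup Y]$. Each edge of $G[X\cup Y]$ with at least one endpoint outside $X\cap Y$ is counted at most once, and each edge inside $X\cap Y$ is counted twice. Put $I=X\cap Y$, $A=X\setminus Y$ and $C=Y\setminus X$. Edges inside $A$ or inside $C$ are not counted, so
\[
 e_B(Z)\le e_G(A,C)+e_G(I,A\cup C)+2e_G(I)=e_G(A\cup I, C\cup I)+e_G(I).
\]
The right-hand side is $e_G(I)$ plus the number of edges of the planar graph obtained from $G[A\cup C\cup I]$ by deleting the edges inside $A$ and inside $C$.

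We now use planarity bounds. Set $a=|A|$, $c=|C|$ and $i=|I|$. Euler's formula gives $e_G(I)\le 3i$ (from $3i-6$, or trivially when $i$ is small). The subgraph $G'$ of $G[A\cup C\cup I]$ formed by the counted edges is planar on $a+c+i$ vertices. Moreover, $A$ and $C$ are independent in $G'$, so every edge of $G'$ meets $I$ or joins $A$ to $C$. The crude estimate $e(G')\le 3(a+c+i)$ gives
\[
 e_B(Z)\le 3a+3c+6i,
\]
while the target is $2|X|+6|Y|=2(a+i)+6(c+i)=2a+6c+8i$. The problem is the term in $a$: we need coefficient $2$ on $a$, but only have $3$.

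This is the main obstacle. To fix it, we split $G'$ into the bipartite graph $G'[A,C]$, with at most $2(a+c)$ edges since a bipartite planar graph has at most $2n-4$ edges, and the edges incident with $I$. A vertex of $A$ sends edges only to $C\cup I$. Contracting or deleting suitably, we bound edges between $A$ and $C\cup I$ by $2(a+c+i)$, using that this is a bipartite planar graph with parts $A$ and $C\cup I$. The remaining edges lie inside $C\cup I$ and number at most $3(c+i)$. So
\[
 e_B(Z)\le 2(a+c+i)+3(c+i)+e_G(I)\le 2a+5c+5i+3i=2a+5c+8i\le 2a+6c+8i,
\]
which is exactly \eqref{eq:hakimi}. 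The delicate part is making sure that the edges inside $I$ counted twice are absorbed by the separate $e_G(I)\le 3i$ term, and that the three edge classes (between $A$ and $C\cup I$, inside $C\cup I$, and the extra copy of $e_G(I)$) together cover every counted pair. We will double-check this partition carefully.
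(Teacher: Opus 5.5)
Your proposal is correct and, once you discard the crude estimate, it is exactly the paper's argument. With your $A,I,C$ playing the roles of the paper's $P,I,Q$, you bound the edges between $A$ and $C\cup I$ by the bipartite planar bound $2(a+c+i)$, the edges inside $C\cup I$ by $3(c+i)$, and the second copy of the edges inside $I$ by $3i$, which gives $2a+5c+8i\le 2|X|+6|Y|$. (No contraction or deletion is needed in the bipartite step, since the edges between $A$ and $C\cup I$ already form a bipartite subgraph of the planar graph $G$.)
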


\begin{proof}
By \cref{thm:hakimi}, it is enough to verify \eqref{eq:hakimi} with capacity $2$ on $V_L$ and capacity $6$ on $V_R$. Let $A_L\subseteq V_L$ and $C_R\subseteq V_R$, and identify them with subsets $A,C\subseteq V(G)$. Put
$I=A\cap C,P=A\setminus C, Q=C\setminus A.$
Let $a=e_G(I),b=e_G(I,P),c=e_G(I,Q),$ and $d=e_G(P,Q).$
A direct count in the double cover gives $e_B(A_L,C_R)=2a+b+c+d.$
The edges counted by $b+d$ form a bipartite planar graph with parts $P$ and $I\cup Q$, so the standard planar bipartite bound gives
$b+d\le 2(|P|+|I|+|Q|).$
The edges counted by $a+c$ form a planar graph on $I\cup Q$, while the edges counted by $a$ form a planar graph on $I$. Hence
$a+c\le 3(|I|+|Q|),$ and $a\le 3|I|.$
Therefore,
\begin{align*}
 e_B(A_L,C_R)= (b+d)+(2a+c)\le 2(|P|+|I|+|Q|)+3(|I|+|Q|)+3|I|\\
 =2|P|+8|I|+5|Q|\le 2|P|+8|I|+6|Q|=2|A|+6|C|.
\end{align*}
This is exactly the capacity inequality required by \cref{thm:hakimi}.
\end{proof}

\begin{proof}[\textbf{Proof of Theorem~\ref{thm:main}}]

The lower bound follows from the construction in Ghosh et al.~\cite{ghosh-etal2021}. We now prove the upper bound.

Let $G$ be an $n$-vertex planar graph and let $B=B(G)$ be its bipartite double cover, oriented as in \cref{lem:asymmetric-orientation}. We count ordered injective copies of $P_{2\ell+1}$ in $G$. Every ordered path
$v_0v_1\cdots v_{2\ell}$
has a unique alternating lift to $B$:$(v_0)_L(v_1)_R(v_2)_L\cdots(v_{2\ell})_L.$

The orientation of $B$ induces a direction on every edge of the abstract path with vertex set $\{0,1,\ldots,2\ell\}$. Call a position a \emph{source} if all incident path edges point away from it. The set of sources forms an independent set in $P_{2\ell+1}$, so its size is at most $\ell+1$. Moreover, the unique independent set of size $\ell+1$ is $\{0,2,\ldots,2\ell\}.$
Indeed, if $0\le i_0<i_1<\cdots<i_\ell\le 2\ell$ are pairwise nonconsecutive, then $i_j\ge 2j$ for every $j$, and equality is forced throughout.

We now distinguish two cases according to the number of sources.

Case 1: at most $\ell$ sources.
Fix a set $S\subseteq \{0,1,\ldots,2\ell\}$ of source positions with $|S|=s\le \ell$.
For each position $i\notin S$, select one incident edge directed into that position.
Since every non-source position has at most two incident edges in the abstract path,
the number of ways to make these choices is at most
$2^{2\ell+1-s}.$
Fix one such choice. The selected edges form a spanning forest of the abstract path.
Every non-source has indegree one in this forest, while every source has indegree
zero; since the underlying graph is a forest, each component contains exactly one
source and is directed away from it.

Choose the images of the $s$ sources first. Their sides in the double cover are
prescribed by parity, so there are at most $n^s\le n^\ell$ choices. Then expose the
remaining vertices in root-to-leaf order. At each step, the new image must be an
outneighbour of an already exposed vertex. By \cref{lem:asymmetric-orientation},
the maximum outdegree in the oriented double cover is at most $6$. Hence, for this
fixed forest, the number of maps extending the source images is at most
$6^{2\ell+1-s}.$
Therefore, for this fixed source set $S$, the total number of compatible maps is at most
\[
2^{2\ell+1-s}\cdot n^s \cdot 6^{2\ell+1-s}
=
12^{2\ell+1-s} n^s.
\]
Since $s\le \ell$ and $2\ell+1$ is fixed, the factor $12^{2\ell+1-s}$ is bounded
by a constant depending only on $\ell$. Thus the contribution of this fixed source set $S$ is $O_\ell(n^\ell).$

We have ignored injectivity and the unselected path edges, so this is a valid upper
bound. For a fixed $s$, the number of source sets $S\subseteq \{0,1,\ldots,2\ell\}$
with $|S|=s$ is $\binom{2\ell+1}{s}$. Summing over all $s\le \ell$, the total
number of possible source sets is $\sum_{s=0}^{\ell} \binom{2\ell+1}{s},$
which is a constant depending only on $\ell$. Therefore, all paths whose source
set has size at most $\ell$ together contribute $O_\ell(n^\ell)$
ordered lifts.

Case 2: exactly $\ell+1$ sources.
By the uniqueness statement above, the only source set of size $\ell+1$ is
$S_0=\{0,2,\ldots,2\ell\}.$
In this case every edge of the lifted path is directed from $V_L$ to $V_R$. For each $x\in V_L$, \cref{lem:asymmetric-orientation} gives $|N^+(x)|\le 2$. For distinct $u,v\in V(G)$, define
\[
m_{uv}=\bigl|\{x\in V_L : N^+(x)=\{u_R,v_R\}\}\bigr|
~~\text{and}~~
p_u=\bigl|\{x\in V_L : N^+(x)=\{u_R\}\}\bigr|.
\]
Set
\[
s_u=p_u+\sum_{v\ne u}m_{uv}
~~\text{and}~~
M=\sum_u p_u+\sum_{\{u,v\}}m_{uv}.
\]
Every vertex of $V_L$ with positive outdegree contributes to exactly one summand in the definition of $M$.
Hence $M\le n.$

A lifted path has the form
$x_0,\; u_1,\; x_1,\; u_2,\; \ldots,\; u_\ell,\; x_\ell,$
where $x_i\in V_L$ for all $i$, $u_i\in V_R$, and the $u_i$ are pairwise distinct. Fix an ordered pairwise distinct sequence $(u_1,\ldots,u_\ell)$. There are at most $s_{u_1}$ choices for $x_0$, at most $s_{u_\ell}$ choices for $x_\ell$, and exactly $m_{u_i u_{i+1}}$ choices for each internal vertex $x_i$ ($1\le i\le \ell-1$). Thus the number of exceptional ordered injective paths is at most
\begin{equation}
\sum_{\substack{(u_1,\ldots,u_\ell)\\\text{pairwise distinct}}}
s_{u_1}s_{u_\ell}\prod_{i=1}^{\ell-1}m_{u_i u_{i+1}}.
\label{eq:exceptional-sum}
\end{equation}
This expression is exactly $2\Phi_\ell(m,p)$, where $\Phi_\ell$ is as defined in \eqref{eq:phi}. The product on the left may reuse a vertex of $V_L$ or violate injectivity between the two sides; both possibilities only increase the upper bound, so the inequality is valid.

Combining the estimate from Case~1 with \eqref{eq:exceptional-sum}, we obtain
\[
N(G,P_{2\ell+1})
\le \Phi_\ell(m,p)+O_\ell(n^\ell).
\]
Applying \cref{thm:weighted} and using $M\le n$, we get
\begin{align*}
N(G,P_{2\ell+1})
&\le \frac{4}{\ell^\ell}M^{\ell+1}+O_\ell(n^\ell)
\le \frac{4}{\ell^\ell}n^{\ell+1}+O_\ell(n^\ell)
=4\ell\left(\frac{n}{\ell}\right)^{\ell+1}+O_\ell(n^\ell).
\end{align*}
Since $G$ was an arbitrary $n$-vertex planar graph, this proves \cref{thm:main}.
\end{proof}

\end{document}